\newtheorem{prop}{Proposition}
\newtheorem{thm}{Theorem}
\newtheorem*{thm*}{Theorem}
\newtheorem{cor}{Corollary}
\newtheorem*{prob*}{Problem}
\theoremstyle{definition}
\newtheorem{defn}{Definition}
\newtheorem*{defn*}{Definition}
\theoremstyle{definition}
\newtheorem{exmp}{Example}
\newtheorem{conjecture}{Conjecture}
\title{Every Binary Code Can Be Realized by Convex Sets}
\author[1]{Megan K. Franke}
\author[2]{Samuel Muthiah}
\affil[1]{Department of Mathematics, University of California Santa Barbara, Santa Barbara, CA 93106-3080}
\affil[2]{Department of Mathematics and Computer Science, Westmont College, Santa Barbara, CA 93108}
\date{April 14, 2018}
\begin{document}

\maketitle
\begin{abstract}
Much work has been done to identify which binary codes can be represented by collections of open convex or closed convex sets. While not all binary codes can be realized by such sets, here we prove that every binary code can be realized by convex sets when there is no restriction on whether the sets are all open or closed. We achieve this by constructing a convex realization for an arbitrary code with $k$ nonempty codewords in $\mathbb{R}^{k-1}$. This result justifies the usual restriction of the definition of convex neural codes to include only those that can be realized by receptive fields that are all either open convex or closed convex. We also show that the dimension of our construction cannot in general be lowered.
\end{abstract}

\section{Introduction}
In neuroscience, it is common to consider collections of coactive neurons as information-coding units in neural populations. Abstractly, we encode this data as a binary code.
\begin{defn}
A \textit{binary code} on $n$ neurons is a collection of subsets $\mathcal{C}$ of the set $[n] = \{1,2,3,\ldots , n\}$. The elements of $\mathcal{C}$ are called \textit{codewords}.
\end{defn}

The primary goal of this paper is to answer the question: can every binary code be realized by convex sets, not necessarily open or closed, in $\mathbb{R}^k$? Prior to addressing the motivation for this question, we provide an example of how a binary code is extracted from a collection of convex sets.

\begin{exmp}
Consider the following diagram where $U_1$, $U_2$, $U_3$, and $U_4$ are convex sets in a stimulus space $X$:
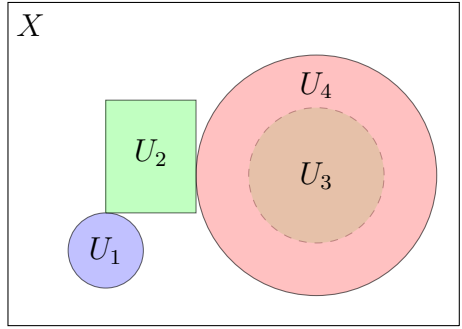
\begin{figure}[H]
\begin{center}
\begin{tikzpicture}
\draw [fill=blue!40, opacity=.6] (-1.7,-1) circle [radius = 0.5] {};
\draw [fill=green!40, opacity=.6, dashed] (1.1,0) circle [radius = .9] {};
\draw [fill=red!40, opacity=.6] (1.1,0) circle [radius = 1.6] {};
\draw [fill=green!40, opacity=.6] (-1.7, -0.5) -- (-0.5, -0.5) -- (-0.5, 1) -- (-1.7,1) -- (-1.7, -0.5);
\node at (1.1,0) {$U_3$};
\node at (-1.7,-1) {$U_1$};
\node at (1.1,1.2) {$U_4$};
\node at (-1.1,0.3) {$U_2$};
\node at (-2.7,2) {$X$};
\draw (3,-2) -- (-3,-2) -- (-3, 2.3) -- (3,2.3) -- (3,-2);
\end{tikzpicture}
\caption{Convex sets $U_1, U_2, U_3,$ and $U_4$, in the stimulus space $X$.}
\end{center}
\end{figure}
The corresponding code $\mathcal{C}$ of such a collection of sets is obtained by representing each unique region cut out by $U_1, U_2, U_3$, and $U_4$ as a binary string, where the $i$th entry is 1 if that region is in $U_i$ and 0 otherwise. For example, $(0,1,0,0) \in \mathcal{C}$ because there exists a region of $U_2$ such that $U_2$ is not intersecting any other $U_i$. Moreover, $(1,0,0,1) \not\in \mathcal{C}$ because $U_1$ and $U_4$ are disjoint. In this way, this diagram generates the following corresponding code: $\mathcal{C} =\{ (0,0,0,0), (1,0,0,0),$ $(0, 1, 0, 0), (0, 0, 0, 1), (1,1,0,0) , (0, 1, 0, 1), (0, 0, 1, 1)\}.$ We say that $\mathcal{C}$ is the binary code arising from our diagram. In this paper, we will write our code using the supports of the binary strings: $\mathcal{C}=\{\emptyset, 1, 2, 4, 12, 24, 34\}$. Since $U_1$, $U_2$, $U_3$, and $U_4$ are convex, we say that $\mathcal{C}$ is convex realizable. The focus of this paper will be on determining when a neural code is convex realizable.

\end{exmp}

	 Our study of binary codes is motivated by biological research into a type of neuron called a place cell. In 1971, John O'Keefe discovered place cells, an accomplishment for which he shared the 2014 Nobel Prize in Physiology or Medicine. A place cell is a neuron that fires when an animal is in a particular location relative to its environment and thus provides an internal representation of the animal's spatial location. These particular locations, called receptive fields, are approximately convex. From the regions cut out by the receptive fields, we obtain a binary code called a neural code \cite{what-makes}. 
     
     It has been observed experimentally that receptive fields arising from place cells are well approximated by sets that are not only convex but full dimensional. Thus, it is usually assumed that receptive fields are open sets. A neural code is generally said to be \say{convex} if there exist open convex sets which realize the code. Since we are focusing on convex sets in general and not open convex sets in particular, we will be using different terminology, to be defined soon.

Next, we repeat Example 1 under the assumption that the convex sets are receptive fields and thus open. 

\begin{exmp}
Consider the following diagram where $U_1$, $U_2$, $U_3$, and $U_4$ are open convex sets in a stimulus space $X$:
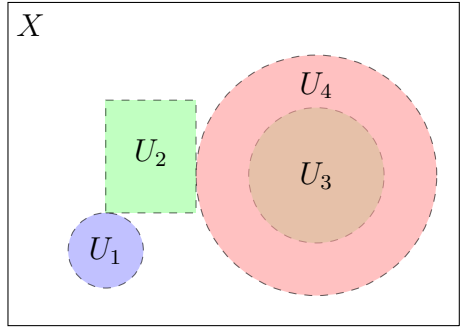
\begin{figure}[H]
\begin{center}
\begin{tikzpicture}
\draw [fill=blue!40, opacity=.6, dashed] (-1.7,-1) circle [radius = 0.5] {};
\draw [fill=green!40, opacity=.6, dashed] (1.1,0) circle [radius = .9] {};
\draw [fill=red!40, opacity=.6, dashed] (1.1,0) circle [radius = 1.6] {};
\draw [fill=green!40, opacity=.6, dashed] (-1.7, -0.5) -- (-0.5, -0.5) -- (-0.5, 1) -- (-1.7,1) -- (-1.7, -0.5);
\node at (1.1,0) {$U_3$};
\node at (-1.7,-1) {$U_1$};
\node at (1.1,1.2) {$U_4$};
\node at (-1.1,0.3) {$U_2$};
\node at (-2.7,2) {$X$};
\draw (3,-2) -- (-3,-2) -- (-3, 2.3) -- (3,2.3) -- (3,-2);
\end{tikzpicture}
\caption{Open convex sets $U_1, U_2, U_3,$ and $U_4$ in stimulus space $X$.}
\end{center}
\end{figure}

Note that by assuming openness, the intersections that have dimension less than 2 from Example 1 are no longer present. For instance, the codeword $(0,1,0,1)$ was present in the code of Example 1 but not in Example 2. From this diagram, we obtain the corresponding binary code: $ \mathcal{C}=\{ (0,0,0,0), (1,0,0,0),$ $(0, 1, 0, 0), (0, 0, 0, 1), (0, 0, 1, 1)\}$, which we write as $\{\emptyset, 1,2,4,34\}$. Since $U_1, U_2, U_3,$ and $U_4$ are open and convex, we say that the code $\mathcal{C}$ itself is open convex.  

\end{exmp}

We formalize these notions with the following definitions. This language allows us to phrase our biologically motivated questions about neural codes in geometric and combinatorial terms.

We introduce the following notation:
$$U_{\sigma} := \bigcap_{i\in\sigma} U_i.$$

\begin{defn}
A code $\mathcal{C}$ on $n$ neurons is \textit{convex realizable} if there exists a collection of convex sets, not necessarily open or closed, $\mathcal{U}=\{U_1,...,U_n\}$ in some stimulus space $X \subseteq \mathbb{R}^k$ such that
$\mathcal{C}=\mathcal{C}(\mathcal{U}):=\{\sigma \subseteq [n] \mid  U_{\sigma} \setminus \bigcup_{j\in [n]\setminus{\sigma}} U_j \neq \emptyset \}$ where $U_{\emptyset} := X$. The minimal $k$ such that $\mathcal{C}$ is convex realizable in $\mathbb{R}^k$ is called the \textit{minimal convex embedding dimension} of $\mathcal{C}$.
\end{defn}

\begin{defn}
If a code $\mathcal{C}$ is convex realizable by a set $\mathcal{U}$ where each $U_i \in \mathcal{U}$ is open, we say that $\mathcal{C}$ is \textit{open convex}. Similarly, if a code $\mathcal{C}$ is convex realizable by a set $\mathcal{U}$ where each $U_i \in \mathcal{U}$ is closed, we say that $\mathcal{C}$ is \textit{closed convex}.
\end{defn}

One of the primary goals of this research area is to determine which codes are open or closed convex realizable, as not all are \cite{what-makes,neural_ring,LSW}. Research focuses on open convex codes because, as mentioned earlier, full dimensional sets are the best representation of the receptive fields and openness guarantees this property. Cruz et al. showed that all max-intersection complete codes are both open and closed convex \cite{giusti-preprint}. Another area of significant interest has been the minimal dimension in which open connected codes have an open connected realization. Mulas and Tran proved that all open connected codes have an open connected realization in dimension at most 3 \cite{connected-codes}. Rosen and Zhang characterized codes that are realizable by open convex sets in dimension 1 \cite{d-1}. Less work has been done investigating convex codes without regard to openness or closedness because these codes are not directly motivated from the receptive fields of place cells. However, by exploring when a binary code is realizable by convex sets, not necessarily open or closed, we provide a frame of reference for the study of open convex or closed convex codes.

If a code is open convex or closed convex, then by definition it is convex realizable. But, are there many more convex realizable codes than open or closed convex realizable codes? Tancer showed that every simplicial complex is realized by convex sets \cite{tancer-survey}.  Some have speculated that, under a certain set of conditions, all codes are convex \cite{Chen}. Another reasonable conjecture is that all codes are convex realizable. Indeed, we will show that every binary code is convex realizable (Theorem 1). Following this result, we prove that the dimension of the convex realization provided in Theorem 1 cannot always be improved (Theorem 2). To finish, we conjecture a classification of codes which can be realized by convex sets in $\mathbb{R}^1$ (Conjecture 1).

\section{Main Results}

Our primary result is a construction of a convex realization in $\mathbb{R}^{k-1}$ of an arbitrary code $\mathcal{C}$, where $k$ is the number of nonempty codewords in $\mathcal{C}$. We will begin with an example of the construction, followed by a proof of the construction in Theorem 1. Following this result, the remainder of the paper explores the minimal convex embedding dimension of a code. In Theorem 2 we prove that, for a certain class of codes, the dimension of the construction in Theorem 1 is the minimal convex embedding dimension of the code. Finally, Conjecture 1 speculates a sufficient condition for the minimal convex embedding dimension of a convex open code to be 1 and provides a possible procedure to prove this result.

Throughout this work we use the notation conv$(A)$ to denote the convex hull of a subset $A\subseteq \mathbb{R}^k$.

\begin{exmp}
Consider the code $\mathcal{C} = \{\emptyset, 12, 34, 123\}$ on $n=4$ neurons. We will show how to realize this code using convex sets by using a method which, later proved in Theorem 1, produces a convex realization of any binary code.

\end{exmp}

\begin{figure}[htb]
\begin{center}
\vspace{.5cm}
\resizebox{4.83in}{!}{
\begin{tikzpicture}
\node (space) at (0,2.5) {};
\draw node[fill=black, inner sep=.1cm][label=below right:$V_{1}^1$] (v11) at (-7.5,2) [shape=circle]{};
\draw node[fill=black, inner sep=.1cm][label=below right:$V_{2}^1$] (v21) at (-3.5,2) [shape=circle]{};

\draw[-](8,-10)--(8,2.7)--(-10,2.7)-- (-10,-10)--(8,-10);
\draw[-] (-8, -10)--(-8,3.7)--(8,3.7)--(8,-10);
\draw[-] (-10, 1)--(8,1);
\draw[-] (-10,-1)--(8,-1);
\draw[-] (-10, -5.5)--(8,-5.5);

\draw[-](-4, -10)--(-4,3.7);
\draw[-](0, -10)--(0,3.7);
\draw[-](4, -10)--(4,3.7);

\node[label=below : Step 1] at (-9.1,2.3) {};
\node[label=below : Step 2] at (-9.1,.4) {};
\node[label=below : Step 3] at (-9.1,-2.7) {};
\node[label=below : Step 4] at (-9.1,-7.3) {};
\node[label=below : Neuron 1] at (-6, 3.5) {};
\node[label=below : Neuron 2] at (-2, 3.5) {};
\node[label=below : Neuron 3] at (2, 3.5) {};
\node[label=below : Neuron 4] at (6, 3.5) {};

\draw node[fill=none, draw=black, inner sep=.1cm] (ov32) at (.5, 0) [shape=circle] {};
\draw node[fill=black, draw=black, inner sep=.1cm] (cv32) at (3.5,0) [shape=circle] {};
\draw[-] (ov32)--(cv32);
\node[label=below : $V_{3}^2$] (v32) at (2,0) {};

\draw node[fill=none, draw=black, inner sep=.1cm] (ov42) at (4.5, 0) [shape=circle] {};
\draw node[fill=black, draw=black, inner sep=.1cm] (cv42) at (7.5,0) [shape=circle] {};
\draw[-] (ov42)--(cv42);
\node[label=below : $V_{4}^2$] (v42) at (6,0) {};

\fill[black!30] (-7.5,-4.5)--(-4.5,-4.5)--(-7.5,-1.5);
\draw node[fill=white, draw=black, inner sep=.1cm] (olv13) at (-7.5, -4.5) [shape=circle] {};
\draw node[fill=white, draw=black, inner sep=.1cm] (orv13) at (-4.5, -4.5) [shape=circle] {};
\draw[dashed] (olv13)--(orv13);
\draw[-] (olv13)--(-7.5,-1.5);
\draw[-] (-7.5,-1.5)--(orv13);
\node[label=below:$V_{1}^3$] (V1_3) at (-6,-4.5) {};
\draw node[fill=black, draw=black, inner sep=.1cm] (olv1) at (-7.5, -1.5) [shape=circle] {};

\fill[black!30] (-3.5,-4.5)--(-.5,-4.5)--(-3.5,-1.5);
\draw node[fill=white, draw=black, inner sep=.1cm] (olv23) at (-.5, -4.5) [shape=circle] {};
\draw node[fill=white, draw=black, inner sep=.1cm] (orv23) at (-3.5, -4.5) [shape=circle] {};
\draw[dashed] (olv23)--(orv23);
\draw[-] (olv23)--(-3.5,-1.5);
\draw[-] (-3.5,-1.5)--(orv23);
\node[label=below:$V_{2}^3$] (V2_3) at (-2,-4.5) {};
\draw node[fill=black, draw=black, inner sep=.1cm] (olv1) at (-3.5, -1.5) [shape=circle] {};

\fill[black!30] (.5,-4.5)--(3.5,-4.5)--(.5,-1.5);
\draw node[fill=white, draw=black, inner sep=.1cm] (olv33) at (.5, -4.5) [shape=circle] {};
\draw node[fill=white, draw=black, inner sep=.1cm] (orv33) at (3.5, -4.5) [shape=circle] {};
\draw[dashed] (olv33)--(orv33);
\draw[-] (olv33)--(.5,-1.5);
\draw[-] (.5,-1.5)--(orv33);
\node[label=below:$V_{3}^3$] (V33) at (2,-4.5) {};
\draw node[fill=black, draw=black, inner sep=.1cm] (olv1) at (.5, -1.5) [shape=circle] {};

\fill[black!30] (-7.5,-9)--(-4.5,-9)--(-7.5,-6);
\draw node[fill=white, draw=black, inner sep=.1cm] (crv1) at (-4.5, -9) [shape=circle] {};
\draw node[fill=black, draw=black, inner sep=.1cm] (olv1) at (-7.5, -9) [shape=circle] {};
\draw[dashed] (olv1)--(crv1);
\draw[-] (olv1)--(-7.5,-6);
\draw[-] (-7.5,-6)--(crv1);
\node[label=below:$U_{1}$] (V1) at (-6,-9) {};
\draw node[fill=black, draw=black, inner sep=.1cm] (olv1) at (-7.5, -6) [shape=circle] {};

\fill[black!30] (-3.5,-9)--(-.5,-9)--(-3.5,-6);
\draw node[fill=white, draw=black, inner sep=.1cm] (orv2) at (-.5, -9) [shape=circle] {};
\draw node[fill=black, draw=black, inner sep=.1cm] (clv2) at (-3.5, -9) [shape=circle] {};
\draw[dashed] (orv2)--(clv2);
\draw[-] (clv2)--(-3.5,-6);
\draw[-] (-3.5,-6)--(orv2);
\node[label=below:$U_{2}$] (V2) at (-2,-9) {};
\draw node[fill=black, draw=black, inner sep=.1cm] (olv1) at (-3.5, -6) [shape=circle] {};

\fill[black!30] (.5,-9)--(3.5,-9)--(.5,-6);
\draw node[fill=black, draw=black, inner sep=.1cm] (crv3) at (3.5, -9) [shape=circle] {};
\draw node[fill=white, draw=black, inner sep=.1cm] (olv3) at (0.5, -9) [shape=circle] {};
\draw[-] (crv3)--(olv3);
\draw[-] (olv3)--(.5,-6);
\draw[-] (.5,-6)--(crv3);
\node[label=below:$U_{3}$] (V3) at (2,-9) {};
\draw node[fill=black, draw=black, inner sep=.1cm] (olv1) at (.5, -6) [shape=circle] {};

\draw node[fill=white, draw=black, inner sep=.1cm] (olv4) at (4.5, -9) [shape=circle] {};
\draw node[fill=black, draw=black, inner sep=.1cm] (crv4) at (7.5,-9) [shape=circle] {};
\draw[-] (olv4)--(crv4);
\node[label=below : $U_{4}$] (v4) at (6,-9) {};

\end{tikzpicture}
}
\caption{Constructing a convex realization of the code $\mathcal{C}=\{\emptyset, 12, 34, 123\}$, as in the proof of Theorem 1.}
\end{center}
\end{figure}

The intuition behind this construction is that, if for each codeword we add a dimension to our realization, then convexity can always be preserved. First, ordering our nonempty codewords, let $\sigma_1 = 12$, $\sigma_2 = 34$, and $\sigma_3 = 123$. Let $\{e_1,e_2\}$ be the standard basis for $\mathbb{R}^2$. We complete the following steps, which we illustrate in Figure 3.\\
\begin{itemize}
\item Step 1: \emph{For each $j\in \sigma_1$, we define the set $V_j^1 =$conv$\{0\}$. Otherwise, we set $V_j^1 = \emptyset$.} Since $1,2 \in \sigma_1$, we take the sets $V_1^1$ and $V_2^1$ to be the point conv$\{0\}$. Moreover, since $3,4 \not\in \sigma_1$, we define $V_3^1$ and $V_4^1$ to be empty. 
\item Step 2: \emph{For each $j\in \sigma_2$, we define the set $V_j^2 =$conv$\{0, e_1\} \setminus$ conv$\{0\}$. Otherwise, we set $V_j^2 = \emptyset$.} Since $3,4 \in \sigma_2$, we define $V_3^2$ and $V_4^2$ to be the segment with open endpoint conv$\{0, e_1\} \setminus$ conv$\{0\}$. Moreover, since $1,2 \not\in \sigma_2$, we take the sets $V_1^2$ and $V_2^2$ to be empty.
\item Step 3: \emph{For each $j\in \sigma_3$, we define the set $V_j^3 =$conv$\{0, e_1, e_2\} \setminus$ conv$\{0, e_1\}$. Otherwise, we set $V_j^3 = \emptyset$.} Since $1,2,3 \in \sigma_3$, we define $V_1^3$, $V_2^3$, and $V_3^3$ to be the triangle with open edge conv$\{0, e_1, e_2\} \setminus$ conv$\{0, e_1\}$. Moreover, since $4 \not\in \sigma_3$, we take the set $V_4^3$ to be empty.
\item Step 4: \emph{After exhausting all of our nonempty codewords, we finish by defining $U_j$ as $$U_j = \bigcup_{i=1}^k V_j^i.$$} In Figure 3, we obtain the $U_i$ in the Step 4 row by unioning down each column. The essential part of this construction is that for all $j$ and $i \neq \ell$, we have that $V_j^i \cap V_j^{\ell} = \emptyset$. Thus, when we union over corresponding sets, we do not accumulate extra codewords that are not in our original code. Therefore, our final collection of $U_i$ realize our nonempty codewords. Convexity is preserved by adding convex sets of increasing dimension at each step while still ensuring that our final $U_i$ are connected. 
\item Step 5: To finish, we reconsider $\emptyset$. Since $\emptyset \in \mathcal{C}$, we define our stimulus space to be $X = \mathbb{R}^2$. Since $\bigcup_{j=1}^4 U_j =$ conv$\{0, e_1, e_2\} \subsetneq \mathbb{R}^2$, we have that $\emptyset$ is in our corresponding code.
\end{itemize}

Extending this construction to the general case, we can show that any binary code can be realized by convex sets.

\begin{thm}
Every code is convex realizable. Moreover, for a code with $k$ nonempty codewords the minimal convex embedding dimension $d\leq k-1$. 

\end{thm}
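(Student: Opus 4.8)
The plan is to turn the worked example above into a general construction and then verify two things: that each set produced is convex, and that the resulting code is exactly $\mathcal{C}$. Fix $\mathcal{C}$ and let $\sigma_1,\dots,\sigma_k$ be any enumeration of its nonempty codewords; the degenerate cases $k\le 1$ are handled separately, so assume $k\ge 2$. Work in $\mathbb{R}^{k-1}$ with standard basis $e_1,\dots,e_{k-1}$, set $v_0=0$ and $v_i=e_i$, and let $\Delta=\operatorname{conv}\{v_0,\dots,v_{k-1}\}$ be the full $(k-1)$-simplex. Define nested faces $F_i=\operatorname{conv}\{v_0,\dots,v_{i-1}\}$ for $0\le i\le k$ (so $F_0=\emptyset$ and $F_k=\Delta$) and \emph{shells} $S_i=F_i\setminus F_{i-1}$, which partition $\Delta$. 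The set $V_j^i$ from the example is exactly $S_i$ when $j\in\sigma_i$ and is $\emptyset$ otherwise, and I would set $U_j=\bigcup_{i\,:\,j\in\sigma_i}S_i\subseteq\Delta$.

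The first real step is convexity of each $U_j$. For a point $x=(x_1,\dots,x_{k-1})$ put $x_0:=1-\sum_{i=1}^{k-1}x_i$, so that $x\in\Delta$ exactly when $x_0,\dots,x_{k-1}\ge 0$, and note that for $x\in\Delta$ one has $x\in S_i$ iff the largest index $p(x)$ with $x_{p(x)}>0$ equals $i-1$; thus $U_j=\{x\in\Delta : p(x)+1\in\{i:j\in\sigma_i\}\}$. The crux is that a convex combination is governed by the larger ``top index'': if $x,y\in\Delta$ and $t\in(0,1)$, then $z=tx+(1-t)y$ satisfies $z_i=tx_i+(1-t)y_i$ for all $i$ including $i=0$ (by linearity), hence $z\in\Delta$ and $p(z)=\max\{p(x),p(y)\}$, since the coordinate of $z$ at that index is a positive combination while all higher coordinates of both $x$ and $y$, hence of $z$, vanish. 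So if $x,y\in U_j$ then $p(z)+1$ equals $p(x)+1$ or $p(y)+1$, both of which lie in $\{i:j\in\sigma_i\}$, giving $z\in U_j$. I expect this to be the main obstacle — not for being deep, but because it is precisely where one must see that a union of shells which may \emph{skip} indices is still convex; the coordinate description makes this transparent, and it also recovers the disjointness $V_j^i\cap V_j^\ell=\emptyset$ noted in the example.

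Next I would compute $\mathcal{C}(\mathcal{U})$. Taking the barycenter $p_i=\tfrac1i(v_0+\dots+v_{i-1})$ of $F_i$, one checks $p(p_i)=i-1$, so $p_i\in S_i$ and $\{j:p_i\in U_j\}=\sigma_i$; hence every $\sigma_i$ lies in $\mathcal{C}(\mathcal{U})$. Conversely every point of $\Delta$ lies in exactly one shell and so has codeword some $\sigma_i$, while every point outside $\Delta$ lies in no $U_j$ and has codeword $\emptyset$. Therefore the nonempty codewords of $\mathcal{C}(\mathcal{U})$ are exactly $\sigma_1,\dots,\sigma_k$, and $\emptyset\in\mathcal{C}(\mathcal{U})$ iff the stimulus space properly contains $\Delta$ (here $\bigcup_j U_j=\Delta$ because each $\sigma_i\ne\emptyset$). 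So I would take $X=\mathbb{R}^{k-1}$ when $\emptyset\in\mathcal{C}$ — legitimate since $\Delta\subsetneq\mathbb{R}^{k-1}$ for $k\ge 2$ — and $X=\Delta$ otherwise; either way $\mathcal{C}(\mathcal{U})=\mathcal{C}$.

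The dimension bound is then immediate, since the whole realization sits inside $X\subseteq\mathbb{R}^{k-1}$: the minimal convex embedding dimension is at most $k-1$, and combined with the realizability just established this proves the theorem.
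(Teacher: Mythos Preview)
Your construction is identical to the paper's: the same nested faces $F_i$, shells $S_i=F_i\setminus F_{i-1}$, and sets $U_j=\bigcup_{i:\,j\in\sigma_i}S_i$, with the same choice of stimulus space depending on whether $\emptyset\in\mathcal{C}$. The verifications, however, differ. For convexity, the paper argues by induction on the number of shells, using the geometric fact that each new shell is open along the facet containing the previous union; you instead give a direct coordinate argument via the ``top nonzero index'' $p(x)$ and the observation that $p(tx+(1-t)y)=\max\{p(x),p(y)\}$ for $t\in(0,1)$. Your argument is cleaner and makes fully explicit why a union of shells that \emph{skips} indices is still convex --- a point the paper's inductive step asserts but does not spell out. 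For the code equality $\mathcal{C}(\mathcal{U})=\mathcal{C}$, the paper computes $U_\sigma$ and $\bigcup_{l\notin\sigma}U_l$ as unions of $S_i$'s and compares; you argue pointwise that every $x\in\Delta$ lies in a unique shell $S_i$ and hence has codeword exactly $\sigma_i$, which is more direct. Both approaches are correct; yours is arguably more transparent.
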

\begin{proof} 
Let $\mathcal{C}$ be an arbitrary code on $n$ neurons and write $\mathcal{C} \setminus\{\emptyset\} = \{ \sigma_1 , \sigma_2 , \ldots \sigma_k \}$. Let $\{e_1,...,e_{k-1}\}$ be the standard basis for $\mathbb{R}^{k-1}$. 
Define the following sets:

\begin{align*} S_i &= \text{conv}\{ 0, e_1, e_2, \ldots e_{i-1}\} \setminus \text{conv}\{0, e_1, e_2, \ldots e_{i-2}\}\\ & \\
V_j^i &= \begin{cases} 
      S_i & \text{if } j\in \sigma_i \\
      \emptyset & \text{if } j\not\in \sigma_i 
   \end{cases}
   \qquad \text{and}
   \qquad
   U_j = \bigcup_{i=1}^k V_j^i.
\end{align*}

We claim that $\mathcal{U} = \{ U_1, U_2 , \ldots , U_n\}$ is a convex realization of $\mathcal{C}$ in a certain stimulus space $X$.

First, we will prove that each $U_j$ is convex. Let $\sigma_j \in \mathcal{C}$ be an arbitrary nonempty codeword. We will show that $U_j$, defined above, is convex by proving that $\bigcup_{i=1}^m V_j^i$ is convex for all $m$. We proceed by induction on $m$. 

Base Case: By our construction, $V_j^1$ is either empty or equal to the point at the origin, implying that $V_j^1$ is convex.

Inductive Step: Assume that $\bigcup_{i=1}^{\ell-1} V_j^i$ is convex. Then, we have that 
$$\bigcup_{i=1}^{\ell} V_j^i = V_j^{\ell}\cup\Bigg( \bigcup_{i=1}^{\ell-1}V_j^i\Bigg).$$
If $V_j^{\ell} = \emptyset$, then we are done by our inductive hypothesis. 
If $V_j^{\ell}$ is nonempty, then by construction
$V_{j}^{\ell} =$ conv$\{ 0, e_1, e_2,  \ldots, e_{\ell -1}\} \setminus$conv$\{0, e_1, e_2 , \ldots e_{\ell-2}\}$. Then, $V_j^{\ell}$ is a convex set contained in the $(\ell-1)$ dimensional simplex $A =$conv$\{ 0, e_1, e_2, \ldots , e_{\ell-1}\}$. Moreover, the set $\bigcup_{i=1}^{\ell-1} V_j^i$ is entirely contained in a facet of $A$, specifically the facet conv$\{0, e_1, e_2, \ldots e_{l-2} \}$, along which $V_j^{\ell}$ is open. This implies that $\bigcup_{i=1}^{\ell} V_j^i$ is convex if and only if $\bigcup_{i=1}^{\ell-1} V_j^i$ is convex. By our inductive hypothesis, $\bigcup_{i=1}^{\ell-1} V_j^i$ is convex, and thus our result follows. 

Next, we will define the stimulus space $X$. If $\emptyset \in \mathcal{C}$, then we define $X = \mathbb{R}^{k-1}$. If $\emptyset \not\in \mathcal{C}$, then we define $X = \bigcup_{j=1}^n U_j$.

Finally, we will show that $\mathcal{C}(\mathcal{U}) = \mathcal{C}$. By construction, $$\bigcup_{j=1}^n U_j = \text{ conv}\{0, e_1, e_2, \ldots, e_{k-1}\} \subsetneq \mathbb{R}^{k-1}.$$ Thus, if $\emptyset\in \mathcal{C}$, then $\emptyset \in \mathcal{C}(\mathcal{U})$. If $\emptyset \not\in \mathcal{C}$, then $X = \bigcup_{j=1}^n U_j$, so it holds that $\emptyset\not\in \mathcal{C}(\mathcal{U})$. 
Next, from our construction,
$$ U_j = \bigcup_{i=1}^k V_j^i = \bigcup_{i \text{ s.t. } j\in \sigma_i} S_i.$$
For any nonempty $\sigma \subseteq [n]$, 
$$U_{\sigma} = \bigcap_{j\in \sigma} U_{j} = \bigcap_{j\in \sigma} \Big( \bigcup_{i \text{ s.t. } j\in \sigma_i }S_i   \Big) = \bigcup_{i \text{ s.t. } \sigma \subseteq \sigma_i} S_i,$$
which follows from the fact that the $S_i$'s are disjoint. 
Therefore, $ U_{\sigma} \neq \emptyset$ if and only if $\sigma \subseteq \sigma_i$ for some codeword $\sigma_i \in \mathcal{C}$. Similarly,
$$\bigcup_{l\not\in \sigma} U_l = \bigcup_{l\not\in\sigma} \Big(\bigcup_{i \text{ s.t. } l \in \sigma_i} S_i\Big) = \bigcup_{i \text{ s.t. } \sigma \not\subseteq \sigma_i} S_i.$$
Therefore, we conclude that 
$$U_{\sigma} \setminus \bigcup_{l\not\in \sigma} U_l  = \emptyset \hspace{3mm} \text{  if and only if  } \hspace{3mm}  \{i \mid \sigma \subseteq \sigma_i\} \cap \{i'\mid \sigma \not \subseteq \sigma_{i'}\} = \emptyset.$$
The last inclusion is equivalent to $\sigma \not \in \mathcal{C}$. 

Thus, a codeword $\sigma$ is realized by $\mathcal{U}$ with respect to $X$ if and only if $\sigma \in \mathcal{C}$. 
\end{proof}

Our next result, Theorem 2, will show that, for some codes, the dimension of the construction in Theorem 1 is the minimal convex embedding dimension. Thus, the construction in Theorem 1 is sharp in terms of dimension. Before this result, recall Helly's Theorem, the tool which we will use to prove Theorem 2.

\begin{prop}[Helly's Theorem]
Let $X_1, \ldots , X_n$ be a finite collection of convex subsets of $\mathbb{R}^k$, with $n>k$. If the intersection of every $k+1$ of these sets is nonempty, then
$$\bigcap_{j=1}^n X_j \neq \emptyset.$$ 
\end{prop}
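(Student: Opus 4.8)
The plan is to give the classical proof of Helly's Theorem via Radon's Theorem together with induction on the number of sets $n$. First I would establish Radon's Theorem: any $k+2$ points $x_1,\dots,x_{k+2}\in\mathbb{R}^k$ can be partitioned into two disjoint index sets $I,J$ with $\mathrm{conv}\{x_i:i\in I\}\cap\mathrm{conv}\{x_i:i\in J\}\neq\emptyset$. This is pure linear algebra: the $k+2$ vectors $(x_i,1)\in\mathbb{R}^{k+1}$ are linearly dependent, so there exist scalars $\lambda_i$, not all zero, with $\sum_i\lambda_i x_i=0$ and $\sum_i\lambda_i=0$. Put $I=\{i:\lambda_i>0\}$ and $J=\{i:\lambda_i\le 0\}$; then $s:=\sum_{i\in I}\lambda_i=-\sum_{i\in J}\lambda_i>0$, and the point $\tfrac1s\sum_{i\in I}\lambda_i x_i=\tfrac1s\sum_{j\in J}(-\lambda_j)x_j$ lies in the asserted intersection, since each side is a convex combination.

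Next I would handle the base case $n=k+2$ of Helly's Theorem. For each $i\in\{1,\dots,k+2\}$, applying the hypothesis to the $k+1$ sets $\{X_j:j\neq i\}$ yields a point $p_i\in\bigcap_{j\neq i}X_j$. Apply Radon's Theorem to $p_1,\dots,p_{k+2}$ to get a partition $I\sqcup J=\{1,\dots,k+2\}$ and a common point $q\in\mathrm{conv}\{p_i:i\in I\}\cap\mathrm{conv}\{p_i:i\in J\}$. For any index $m$: if $m\notin I$, then every $p_i$ with $i\in I$ satisfies $i\neq m$ and hence $p_i\in X_m$, so by convexity of $X_m$ we get $\mathrm{conv}\{p_i:i\in I\}\subseteq X_m$ and thus $q\in X_m$; if $m\notin J$, the symmetric argument gives $q\in X_m$. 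Since every $m$ is missing from at least one of $I,J$, we conclude $q\in\bigcap_{m=1}^{k+2}X_m$.

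Finally I would induct on $n>k+2$. Given convex sets $X_1,\dots,X_n$ satisfying the hypothesis, replace $X_{n-1}$ and $X_n$ by the single convex set $X_{n-1}\cap X_n$, obtaining $n-1$ convex sets. I claim these still satisfy the $(k+1)$-wise intersection hypothesis: a subfamily of size $k+1$ that omits $X_{n-1}\cap X_n$ is a subfamily of the original collection and so intersects by hypothesis; a subfamily of size $k+1$ that contains $X_{n-1}\cap X_n$ corresponds to an intersection of $k+2$ distinct original sets, and every $k+1$ of those $k+2$ sets intersect by hypothesis, so the base case $n=k+2$ shows their total intersection is nonempty. The induction hypothesis applied to the $n-1$ sets then gives $\bigcap_{j=1}^{n}X_j\neq\emptyset$.

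The linear-algebra content of Radon's Theorem is entirely routine; the only place that needs care is the bookkeeping in the inductive step, namely checking that merging two sets preserves the Helly hypothesis, and this is precisely where the $n=k+2$ base case is invoked (so the merge trick cannot replace it). Alternatively, since the statement is classical, one could simply cite it; the argument above is the self-contained version I would include.
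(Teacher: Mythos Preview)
Your proof is correct and follows the standard, classical route to Helly's Theorem via Radon's Theorem. The only minor omission is that, since the statement assumes $n>k$, the induction should formally begin at $n=k+1$; but in that case the hypothesis already asserts that the single family of all $k+1$ sets intersects, so the conclusion is immediate and your ``base case'' $n=k+2$ is indeed the first nontrivial step.

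As for comparison with the paper: there is nothing to compare. The paper does not prove this proposition at all---it simply recalls Helly's Theorem as a classical fact to be used as a tool in the proof of Theorem~2. Your write-up supplies a self-contained proof where the paper offers only a citation-style statement, which is entirely appropriate given that the result is standard.
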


We begin with an example that illustrates the main ideas of the upcoming proof:
\begin{exmp}
Let $\mathcal{C}_4=\{123, 124, 134, 234\}$, and let $\mathcal{U}=\{U_1, U_2, U_3, U_4 \}$ be a convex realization of $\mathcal{C}_4$. Then, there exist the following distinct points: $a_{1} \in U_{234}$, $a_{2} \in U_{134},$ $a_{3} \in U_{124}$, and $a_{4} \in U_{123}$. That is, the intersection of any group of three of the sets in $\mathcal{U}$ is nonempty. Suppose for contradiction that $\mathcal{C}_4$ has a convex realization in $\mathbb{R}^2$. By Helly's Theorem, there exists a point $x\in \bigcap_{i=1}^4 U_i$. To clarify why such a point exists and to give an idea of the proof of Theorem 2, consider Figure 4.

First, by the convexity of $U_1$, the triangle conv$\{a_{2}, a_3, a_4\}$ is contained in $U_{1}$, since $a_{2}\in U_{134} \subseteq U_{1}$, $a_{3} \in U_{124}\subseteq U_{1}$, and $a_4 \in U_{123} \subseteq U_1$. This is pictured in the top left box of Figure 4. Similarly, conv$\{a_1, a_3, a_4\} \subseteq U_2$, conv$\{a_1, a_2, a_4\} \subseteq U_3$, and conv$\{a_1, a_2, a_3\} \subseteq U_4$, as depicted in the top right, lower left, and lower right boxes of Figure 4 respectively.

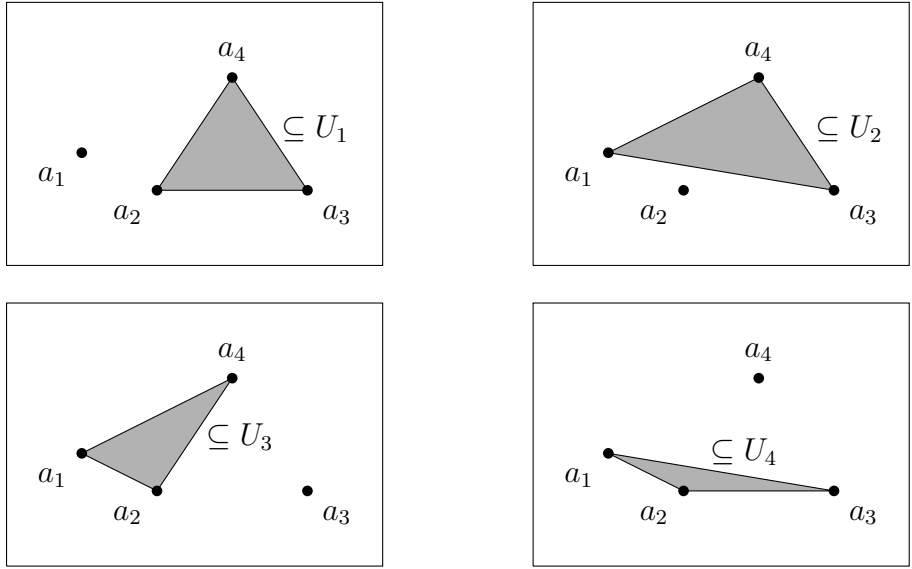
\begin{figure}[H]
\begin{center}
\begin{tikzpicture}
\fill[black!30] (0,1.5)--(1,0)--(-1,0)--(0,1.5);
\draw node[label=above:$\subseteq U_1$]at (1.1,.3) [shape=circle]{};
\draw node[fill=black, inner sep=.05cm][label=above:$a_{4}$]at (0,1.5) [shape=circle]{};
\draw node[fill=black, inner sep=.05cm][label=below right:$a_{3}$] at (1,0) [shape=circle]{};
\draw node[fill=black, inner sep=.05cm][label=below left:$a_{2}$] at (-1,0) [shape=circle]{};
\draw node[fill=black, inner sep=.05cm][label=below left:$a_{1}$]  at (-2,.5) [shape=circle]{};
\draw [-] (0,1.5)--(1,0)--(-1,0)--(0,1.5);
\draw[-] (-3, -1)--(2,-1)--(2,2.5)--(-3,2.5)--(-3,-1);

\fill[black!30][-]  (7,1.5)--(8,0)--(5,.5)--(7,1.5);
\draw node[label=above:$\subseteq U_2$]at (8.2,.3) [shape=circle]{};
\draw node[fill=black, inner sep=.05cm][label=above:$a_{4}$]at (7,1.5) [shape=circle]{};
\draw node[fill=black, inner sep=.05cm][label=below right:$a_{3}$] at (8,0) [shape=circle]{};
\draw node[fill=black, inner sep=.05cm][label=below left:$a_{2}$] at (6,0) [shape=circle]{};
\draw node[fill=black, inner sep=.05cm][label=below left:$a_{1}$]  at (5,.5) [shape=circle]{};
\draw [-]  (7,1.5)--(8,0)--(5,.5)--(7,1.5);
\draw[-] (4, -1)--(9,-1)--(9,2.5)--(4,2.5)--(4,-1);

\fill[black!30]   (8,-4)--(5,-3.5)--(6,-4)--(8,-4);
\draw node[label=above:$\subseteq U_4$]at (6.8,-4) [shape=circle]{};
\draw node[fill=black, inner sep=.05cm][label=above:$a_{4}$]at (7,-2.5) [shape=circle]{};
\draw node[fill=black, inner sep=.05cm][label=below right:$a_{3}$] at (8,-4) [shape=circle]{};
\draw node[fill=black, inner sep=.05cm][label=below left:$a_{2}$] at (6,-4) [shape=circle]{};
\draw node[fill=black, inner sep=.05cm][label=below left:$a_{1}$]  at (5,-3.5) [shape=circle]{};
\draw [-]  (8,-4)--(5,-3.5)--(6,-4)--(8,-4);
\draw[-] (4, -5)--(9,-5)--(9,-1.5)--(4,-1.5)--(4,-5);

\fill[black!30]  (0,-2.5)--(-2,-3.5)--(-1,-4)--(0,-2.5);
\draw node[label=above:$\subseteq U_3$]at (.1,-3.8) [shape=circle]{};
\draw node[fill=black, inner sep=.05cm][label=above:$a_{4}$]at (0,-2.5) [shape=circle]{};
\draw node[fill=black, inner sep=.05cm][label=below right:$a_{3}$] at (1,-4) [shape=circle]{};
\draw node[fill=black, inner sep=.05cm][label=below left:$a_{2}$] at (-1,-4) [shape=circle]{};
\draw node[fill=black, inner sep=.05cm][label=below left:$a_{1}$]  at (-2,-3.5) [shape=circle]{};
\draw [-] (0,-2.5)--(-2,-3.5)--(-1,-4)--(0,-2.5);
\draw[-] (-3, -5)--(2,-5)--(2,-1.5)--(-3,-1.5)--(-3,-5);

\end{tikzpicture}
\caption{Finding a point $x\in \bigcap_{i=1}^4 U_i$.}
\end{center}
\end{figure}

Superimposing the above four boxes, we obtain the following:

\begin{figure}[H]
\begin{center}
\begin{tikzpicture}
\fill[black!30] (0,1.5)--(-2,0.5)--(-1,0)--(1,0)--(0,1.5);
\draw node[fill=black, inner sep=.05cm][label=above:$x$]at (-.77,.32) [shape=circle]{};
\draw node[fill=black, inner sep=.05cm][label=above:$a_{4}$]at (0,1.5) [shape=circle]{};
\draw node[fill=black, inner sep=.05cm][label=below right:$a_{3}$] at (1,0) [shape=circle]{};
\draw node[fill=black, inner sep=.05cm][label=below left:$a_{2}$] at (-1,0) [shape=circle]{};
\draw node[fill=black, inner sep=.05cm][label=below left:$a_{1}$]  at (-2,.5) [shape=circle]{};
\draw [-] (0,1.5)--(1,0)--(-1,0)--(0,1.5)--(-2,0.5)--(1,0);
\draw[-] (-2,.5)--(-1,0);
\draw[-] (-3, -1)--(2,-1)--(2,2.5)--(-3,2.5)--(-3,-1);
\end{tikzpicture}
\caption{Superimposing the four boxes in Figure 4.}
\end{center}
\end{figure}
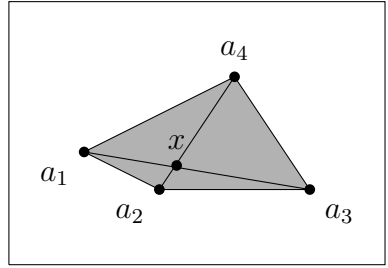

Thus, $x\in \bigcap_{i=1}^4 U_i$, so the codeword $1234$ is realized at $x$, which is a contradiction since $1234 \not\in \mathcal{C}_4$. Although Figures 4 and 5 only provide one case of the placement of four points, Helly's Theorem holds regardless of where these points are placed in $\mathbb{R}^2$, so such an $x$ will always exist. Thus, by Helly's Theorem, the minimal convex embedding dimension of $\mathcal{C}_4$ must be greater than 2 and thus equals 3 by Theorem 1.
\end{exmp}

\begin{defn}
Let $\mathcal{C}_n$ be the code on $n$ neurons containing exactly all codewords of length $n-1$, \begin{equation}\mathcal{C}_n := \{ \sigma\subseteq [n] \mid |\sigma| = n-1\}.\tag{$\ast$}
\end{equation}
\end{defn}

The code $\mathcal{C}_n$ contains exactly the codewords of length $n-1$, so $|\mathcal{C}_n| =$ ${n} \choose {n-1}$ $=n$. In Theorem 2, we prove that $\mathcal{C}_n$ has minimal embedding dimension $n-1$.

\begin{thm}
Let $\mathcal{C}_n$ be the code on $n$ neurons as defined in $(\ast)$. Then, the minimal convex embedding dimension of $\mathcal{C}_n$ is $n-1$. That is, the embedding dimension from Theorem 1 is exactly the minimal convex embedding dimension of $\mathcal{C}_n$ for every $n$. 
\end{thm}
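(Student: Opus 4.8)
The plan is to establish the two inequalities $d \leq n-1$ and $d \geq n-1$ separately, where $d$ is the minimal convex embedding dimension of $\mathcal{C}_n$. The upper bound $d \leq n-1$ is immediate from Theorem 1, since $\mathcal{C}_n$ has exactly $k = n$ nonempty codewords, so Theorem 1 yields a convex realization in $\mathbb{R}^{n-1}$. Thus the entire content of the theorem is the lower bound $d \geq n-1$, i.e., that $\mathcal{C}_n$ admits no convex realization in $\mathbb{R}^{n-2}$, and this is where Helly's Theorem (Proposition 2) enters, exactly as foreshadowed by Example 4.

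First I would argue by contradiction: suppose $\mathcal{U} = \{U_1, \ldots, U_n\}$ is a convex realization of $\mathcal{C}_n$ in $\mathbb{R}^{n-2}$, possibly inside some stimulus space $X \subseteq \mathbb{R}^{n-2}$. For each $i \in [n]$, the codeword $\sigma_i := [n] \setminus \{i\}$ has length $n-1$ and lies in $\mathcal{C}_n$, so by the definition of convex realizability there is a point $a_i \in U_{\sigma_i} \setminus \bigcup_{j \notin \sigma_i} U_j = U_{\sigma_i} \setminus U_i$; in particular $a_i \in \bigcap_{j \neq i} U_j$. Now I claim every $(n-1)$-element subfamily of $\mathcal{U}$ has nonempty intersection: indeed, the subfamily obtained by deleting $U_i$ is $\{U_j : j \neq i\}$, which contains $a_i$. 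Since $\mathcal{U}$ consists of $n$ convex subsets of $\mathbb{R}^{n-2}$ and $n > n-2$, and every $(n-2)+1 = n-1$ of them intersect, Helly's Theorem gives a point $x \in \bigcap_{j=1}^n U_j$.

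The conclusion is then that $x$ witnesses the full codeword $[n] = 123\cdots n$ (or possibly a subset thereof, but at minimum $x$ lies in $U_{[n]}$, so the codeword realized at $x$ contains $[n]$, forcing it to equal $[n]$), hence $[n] \in \mathcal{C}(\mathcal{U}) = \mathcal{C}_n$. But $|[n]| = n \neq n-1$, so $[n] \notin \mathcal{C}_n$, a contradiction. Therefore no convex realization in $\mathbb{R}^{n-2}$ exists, so $d \geq n-1$, and combined with the upper bound, $d = n-1$.

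I do not expect any serious obstacle here; the argument is essentially the rigorous version of Example 4 with the concrete points $a_1, \ldots, a_4$ replaced by the $a_i \in \bigcap_{j \neq i} U_j$. The one point requiring a little care is making sure the application of Helly's Theorem is valid — namely checking the hypothesis $n > k$ with $k = n-2$ (true for all $n \geq 1$, and the statement is only interesting for $n \geq 2$) and that we genuinely have all $(k+1)$-fold intersections nonempty, which we do since $k + 1 = n - 1$ and we exhibited a point in each $(n-1)$-fold intersection. A secondary detail is handling the stimulus space $X$: since each $a_i$ and the Helly point $x$ automatically lie in $\bigcup_j U_j \subseteq X$, no issue arises, but I would state this explicitly to be safe. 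One should also note the edge cases $n=1$ (where $\mathcal{C}_1 = \{\emptyset\}$ and the claimed dimension is $0$, realizable trivially) are consistent, though the interesting range is $n \geq 2$.
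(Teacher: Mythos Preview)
Your proposal is correct and follows essentially the same argument as the paper: a contradiction via Helly's Theorem, using that every $(n-1)$-fold intersection of the $U_i$ is nonempty to force a point in $\bigcap_{i=1}^n U_i$, which would realize the forbidden codeword $[n]$. You are slightly more explicit than the paper in exhibiting the witness points $a_i$ and in checking the hypotheses of Helly's Theorem and the stimulus-space detail, but the core idea is identical.
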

\begin{proof}

Let $n$ be arbitrary. Let $\{U_1 , U_2 ,\ldots , U_n\}$ be a convex realization of $\mathcal{C}_n$.

Assume for contradiction that $U_1, U_2, \ldots, U_n$ all lie in a copy of $\mathbb{R}^k$ where $k\leq n-2$. Every collection of $(n-1)$ of the $U_i$'s intersect by construction of $\mathcal{C}_n$, so by Helly's Theorem, there exists a point $x\in \bigcap_{i=1}^n U_i$. Thus, the codeword $[n]$ is realized at $x$, which is a contradiction as $[n] \not\in \mathcal{C}$.
Therefore, $\mathcal{C}_n$ is not convex realizable in $\mathbb{R}^{k}$ for $k\leq n-2$, implying by Theorem 1 that its minimal embedding dimension is $n-1$. 
\end{proof}

\begin{cor}
The minimal convex embedding dimension of all neural codes has no upper bound.
\end{cor}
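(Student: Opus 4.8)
The plan is to deduce Corollary 1 directly from Theorem 2. Theorem 2 establishes that the code $\mathcal{C}_n$ on $n$ neurons has minimal convex embedding dimension exactly $n-1$. Since this holds for every positive integer $n$, the family $\{\mathcal{C}_n\}_{n\geq 1}$ furnishes an explicit sequence of neural codes whose minimal convex embedding dimensions are $0, 1, 2, 3, \ldots$, a strictly increasing and unbounded sequence.

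First I would recall the statement: the minimal convex embedding dimension is a function from the set of all neural codes to $\mathbb{N}$, and we wish to show its image is not bounded above. Suppose, for contradiction, that there were an upper bound $M \in \mathbb{N}$ such that every neural code is convex realizable in $\mathbb{R}^M$. Then in particular $\mathcal{C}_{M+2}$ would be convex realizable in $\mathbb{R}^M$. But by Theorem 2, the minimal convex embedding dimension of $\mathcal{C}_{M+2}$ is $(M+2)-1 = M+1 > M$, contradicting the assumption. Hence no such $M$ exists, and the minimal convex embedding dimension has no upper bound over the class of all neural codes.

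There is essentially no obstacle here: the entire content of the corollary is packaged in Theorem 2, so the proof amounts to the observation that $n-1 \to \infty$ as $n \to \infty$. The only thing to be mildly careful about is phrasing, namely to make clear that we are quantifying over \emph{all} neural codes (so that the existence of a single bad family suffices) rather than over codes on a fixed number of neurons. I would write the argument in two or three sentences, citing Theorem 2 and invoking the codes $\mathcal{C}_n$ from $(\ast)$, and conclude that their minimal convex embedding dimensions $n-1$ form an unbounded set.
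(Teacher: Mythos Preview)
Your proposal is correct and follows exactly the paper's approach: the corollary is stated as an immediate consequence of Theorem~2, since the codes $\mathcal{C}_n$ have minimal convex embedding dimension $n-1$, which is unbounded in $n$. Your version is simply a slightly more explicit write-up of the same one-line deduction.
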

\begin{proof}
This follows immediately from Theorem 2.
\end{proof}

Lastly, we make a conjecture pertaining to codes with minimal convex embedding dimension 1. This conjecture is related to a result of Rosen and Zhang. Rosen and Zhang characterized codes that are realizable by open convex sets in dimension 1 \cite{d-1}. We speculate that these codes are precisely the codes with minimal convex embedding dimension 1 (Conjecture 1). Conjecture 1 is equivalent to Conjecture 2, which provides a sufficient condition for an open convex code to have a minimal convex embedding dimension of 2.

\begin{defn}
Let $\mathcal{C}$ be an open convex code. The smallest dimension $k$ in which $\mathcal{C}$ is realizable by open convex sets in $\mathbb{R}^k$ is its \textit{minimal open convex embedding dimension}. 
\end{defn}

\begin{conjecture}
A code $\mathcal{C}$ has minimal convex embedding dimension 1 if and only if it has minimal \underline{open} convex embedding dimension 1. 
\end{conjecture}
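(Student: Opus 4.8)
\emph{Proof proposal.} I would prove the two implications separately. The reverse direction is routine: an open convex realization in $\R^1$ is in particular a convex realization in $\R^1$, so $\mathcal{C}$ has minimal convex embedding dimension at most $1$; it cannot be $0$ because the only convex subsets of $\R^0$ are $\emptyset$ and the single point --- both open --- so being convex realizable in $\R^0$ coincides with being open convex realizable in $\R^0$, which fails by hypothesis. Hence the minimal convex embedding dimension is exactly $1$. For the forward direction, since a minimal open convex embedding dimension is always at least the minimal convex embedding dimension, it suffices to produce an open convex realization of $\mathcal{C}$ in $\R^1$ starting from an arbitrary convex realization of $\mathcal{C}$ in $\R^1$.

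A word of caution is in order. The code $\mathcal{C}=\{\emptyset,13,23\}$ is convex realizable in $\R^1$ by $U_1=(0,1)$, $U_2=[1,2)$, $U_3=(0,2)$, yet its codewords force $U_3=U_1\sqcup U_2$ with $U_1,U_2$ nonempty, which is impossible when the three sets are open and convex since a convex set is connected; so $\mathcal{C}$ is open convex in no dimension, and the forward implication as literally phrased is false. One should therefore prove it under the hypothesis that $\mathcal{C}$ is open convex (equivalently, has no local obstruction), and that hypothesis must be used in an essential way.

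With $\mathcal{C}$ open convex, fix a convex realization $\mathcal{U}=\{U_1,\dots,U_n\}$ in $\R^1$. Each $U_i$ is then an interval, and the finitely many endpoints cut $\R$ into finitely many open cells and the walls (single points) separating them; by convexity each $U_i$ occupies a consecutive block of cells and walls. My plan is to \say{open up} this arrangement by a blow-up: replace every wall $p$ by a short open cell, turning the line into an alternating chain of old cells and new wall-cells, and redefine $U_i'$ to be the smallest open interval covering exactly the new regions that $U_i$ should cover --- the cells it fully contained, together with the blow-up of each wall lying in $U_i$. Convexity guarantees this target collection is again a consecutive block, so $U_i'$ is a genuine open interval. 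A direct check shows the interiors of the new cells reproduce exactly the old cell- and wall-codewords, so $\mathcal{C}\subseteq\mathcal{C}(\mathcal{U}')$; the only additional regions are the new walls introduced by the blow-up, and the code appearing at such a wall has the form $\gamma\cap\delta$, where $\gamma$ is the code of an adjacent old cell and $\delta$ the code of the adjacent old wall. Hence $\mathcal{C}(\mathcal{U}')=\mathcal{C}$ as soon as every such intersection $\gamma\cap\delta$ is already a codeword of $\mathcal{C}$.

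The main obstacle, and essentially the whole content, is to guarantee exactly this: that one can choose the starting $1$-dimensional realization so that the code of each cell intersected with the code of an adjacent wall is again a codeword. The caution example shows this can fail for a poor realization of a code that is not open convex, so this is precisely where open-convexity enters. I see two routes. The first is to massage the interval arrangement directly: the discrepancy between $\gamma\cap\delta$ and $\delta$ at a wall comes entirely from \say{half-open} behavior there (some $U_i$ closed on one side of the wall, open on the other), and nudging such endpoints in or out alters the code only by inserting a set squeezed between two codewords already consecutive along the line; one would argue, using the absence of local obstructions, that these repairs can be carried out until every wall-code equals the intersection of its two neighboring cell-codes. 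The second route is to invoke Rosen and Zhang's characterization of $1$-dimensional open convex codes as a black box and instead prove that every code admitting a $1$-dimensional convex realization and having no local obstruction meets their criterion --- for instance by slicing a high-dimensional open convex realization of $\mathcal{C}$ along a generic line and reconciling the resulting interval arrangement with the given one. Either way, the crux is reducing the abstract open-convexity hypothesis to the concrete $1$-dimensional combinatorics, and I expect this to be the decisive step; it is in effect the procedure proposed in Conjecture 2.
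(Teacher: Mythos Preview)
The paper does not prove this statement: it is stated as a conjecture, and in lieu of a proof the paper only proposes the $\varepsilon/3$ nudging procedure (shrink each interval at its open endpoints, extend it at its closed ones) and writes ``we conjecture that $\mathcal{C}(\mathcal{U})=\mathcal{C}(\mathcal{U}')$,'' explicitly leaving the case analysis open. So there is no paper proof to compare against.

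Your blow-up is essentially the same construction in different clothing: the paper moves each endpoint by $\pm\varepsilon/3$, while you replace each wall by a short open cell; both yield open intervals and both create new boundary points whose codewords must then be identified. You go further than the paper in two respects. First, you isolate the crux: the codeword appearing at each new wall is an intersection $\gamma\cap\delta$ of two adjacent old codewords, and one must show this already lies in $\mathcal{C}$ --- this is precisely where the paper's unchecked ``many cases'' live. Second, and more significantly, your example $\mathcal{C}=\{\emptyset,13,23\}$ shows the conjecture is \emph{false as literally stated}: this code has minimal convex embedding dimension $1$ but is not open convex in any dimension, and indeed applying the paper's own $\varepsilon/3$ procedure to your realization $U_1=(0,1)$, $U_2=[1,2)$, $U_3=(0,2)$ produces the spurious codeword $3$. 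The paper does not notice this. The statement therefore needs the extra hypothesis that $\mathcal{C}$ is open convex, exactly as you say, and your identification of where that hypothesis must be invoked is genuine content beyond what the paper offers. Your two proposed routes for exploiting the hypothesis are plausible but remain sketches; neither you nor the paper closes the gap.
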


We propose a procedure to obtain an open convex realization in dimension 1 from a code which has minimal convex embedding dimension of 1, as follows:

Let $\mathcal{C}$ be a binary code on $n$ neurons with minimal convex embedding dimension $d=1$, and let $\mathcal{U}=\{I_1,...,I_n\}$ be a convex realization of $\mathcal{C}$ in $\mathbb{R}^1$. We denote the left and right endpoints of the interval $I_k$ by  $a_k$ and $b_k$, respectively. 
Define $\varepsilon$ to be the smallest non-zero distance between any two endpoints. Next we will modify the endpoints of each $I_k\in \mathcal{U}$ to obtain a new open interval $I_k'=(a_k',b_k')$, where 
\begin{equation*}
a_k':=\begin{cases} 
a_k-\varepsilon/3 & \text{if } a_k \in{I_k}\\
a_k+\varepsilon/3 & \text{if } a_k \not\in{I_k}
\end{cases} \quad \text{ and } \quad 
b_k':=\begin{cases} 
b_k+\varepsilon/3 & \text{if } b_k \in{I_k}\\
b_k-\varepsilon/3 & \text{if } b_k \not\in{I_k}
\end{cases}.
\end{equation*}

In other words, we are shrinking $I_k$ at open endpoints and extending $I_k$ at closed endpoints. Let $\mathcal{U}' := \{I_k' = (a_k', b_k') \mid I_k \in \mathcal{U}\}$. We conjecture that $\mathcal{C}(\mathcal{U}) = \mathcal{C}(\mathcal{U'})$. 

However, in order to verify that $\mathcal{C}(\mathcal{U}) = \mathcal{C}(\mathcal{U'})$, many cases must be checked. For example, to show the inclusion $\mathcal{C}(\mathcal{U}) \subseteq \mathcal{C}(\mathcal{U'})$, it is necessary to consider codewords realized in $\mathcal{C}(\mathcal{U})$ at a single point or over an interval of nonzero length separately.

Conjecture 1 is equivalent to the following conjecture:

\begin{conjecture}
Suppose $\mathcal{C}$ is convex open and has a minimal open convex embedding dimension of 2. Then the minimal convex embedding dimension of $\mathcal{C}$ is 2.
\end{conjecture}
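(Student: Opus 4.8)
The plan is to reduce Conjecture~2 to a one-dimensional statement and then prove that statement by perturbing an interval realization into an open one, as in the procedure described just above. Suppose $\mathcal{C}$ is open convex with minimal open convex embedding dimension $2$. An open convex realization in $\mathbb{R}^2$ is in particular a convex realization, so the minimal convex embedding dimension $d$ of $\mathcal{C}$ is at most $2$, and I only need to rule out $d\le 1$. Since a convex subset of $\mathbb{R}^1$ is an interval, and a realization in $\mathbb{R}^0$ forces $\mathcal{C}$ to be a trivial code that is already open convex in $\mathbb{R}^0$ (hence not of open convex embedding dimension $2$), it suffices to prove: \emph{if an open convex code is realizable by (not necessarily open) intervals in $\mathbb{R}^1$, then it is realizable by open intervals in $\mathbb{R}^1$}. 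Granting this, $d\le 1$ would yield an open convex realization of $\mathcal{C}$ in $\mathbb{R}^1$, contradicting the hypothesis; hence $d=2$. (This reduced statement is exactly the substantive direction of Conjecture~1 restricted to open convex codes, which is why the two conjectures are equivalent.)

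To prove the reduced statement, fix an interval realization $\mathcal{U}=\{I_1,\dots,I_n\}$ of $\mathcal{C}$ in $\mathbb{R}^1$, let $P$ be the finite set of endpoints of the $I_k$, and put $\tau(x)=\{\,i: x\in I_i\,\}$. Off $P$ the map $\tau$ is locally constant, so $\mathbb{R}$ decomposes into finitely many open ``cells'' of constant type together with the points of $P$; a nonempty $\sigma$ lies in $\mathcal{C}$ iff $\sigma=\tau(x)$ for some $x$, with $\emptyset$ governed by the stimulus space. Because each $I_k$ is an interval, the cells (singletons $\{p\}$ included) whose type contains a fixed $i$ form a consecutive block. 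Now carry out the procedure: with $\varepsilon$ the minimal nonzero gap in $P$, push each closed endpoint outward by $\varepsilon/3$ and each open endpoint inward by $\varepsilon/3$ to get open intervals $\mathcal{U}'=\{I_1',\dots,I_n'\}$, and take the stimulus space of $\mathcal{U}'$ to be $\mathbb{R}^1$ if $\emptyset\in\mathcal{C}$ and $\bigcup_k I_k'$ otherwise. A routine check shows the perturbed endpoints keep the order of $P$ (near $p$ they split into a group at $p-\varepsilon/3$ and a group at $p+\varepsilon/3$), that each original open cell $(p,q)$ survives as the nonempty cell $(p+\varepsilon/3,\,q-\varepsilon/3)$ with the same type, and that each original singleton $\{p\}$ survives, blown up, as the cell $(p-\varepsilon/3,\,p+\varepsilon/3)$ with the same type. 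So no codeword is lost.

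The delicate point, which I expect to be the main obstacle, is that $\mathcal{U}'$ may acquire new codewords. The only genuinely new atoms are the singleton cells $\{p-\varepsilon/3\}$ and $\{p+\varepsilon/3\}$, and a short computation identifies their types as $\tau(p^{-})\cap\tau(p)$ and $\tau(p)\cap\tau(p^{+})$, where $p^{\mp}$ is a point just to the left/right of $p$. Thus $\mathcal{C}(\mathcal{U}')=\mathcal{C}$ holds precisely when every such ``touching intersection'' already belongs to $\mathcal{C}$. This can fail for a general interval realization of a general code --- for example the half-open realization $\{\,[0,1),\,[0,2],\,[1,2]\,\}$ of $\{\emptyset,12,23\}$ produces the spurious codeword $2$ --- and this is exactly where the hypothesis that $\mathcal{C}$ is open convex must be used: a failure $\tau(p^{-})\cap\tau(p)\notin\mathcal{C}$ means that in the interval realization some neuron's interval is ``sandwiched'', i.e.\ covered by a union of other intervals that touch it only at $p$ and are pairwise non-overlapping there, and such a sandwiching is incompatible with \emph{any} open convex realization of $\mathcal{C}$ by the standard connectivity argument (a connected set meeting two disjoint open sets cannot be contained in their union).

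The heart of the proof is therefore to upgrade this observation to a theorem: show that an open convex code admits no interval realization in $\mathbb{R}^1$ exhibiting such a sandwiching at a shared endpoint --- or, if that is too strong, that any interval realization of an open convex code can be modified to remove all such sandwichings without changing the code --- and then conclude that the perturbation $\mathcal{U}\mapsto\mathcal{U}'$ is safe and supplies the open realization in $\mathbb{R}^1$ demanded by the reduction. Translating the global hypothesis ``$\mathcal{C}$ is open convex in some $\mathbb{R}^d$'' into this local one-dimensional combinatorial constraint is the step I expect to require real work; an alternative is to invoke the Rosen--Zhang classification of one-dimensional open convex codes and to check that any code with the consecutive-block structure above that is also open convex satisfies their criterion. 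Either route isolates the same essential point.
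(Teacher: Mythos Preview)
The paper does not prove this statement: Conjecture~2 is stated as a conjecture, declared equivalent to Conjecture~1, and the paper only proposes the $\varepsilon/3$ endpoint-perturbation procedure, remarking that ``many cases must be checked.'' There is no proof in the paper for you to match.

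Your proposal follows exactly the route the paper sketches --- the reduction of Conjecture~2 to the one-dimensional statement and then the $\varepsilon/3$ perturbation of an interval realization into an open one --- and you push the analysis noticeably further than the paper does. In particular, you correctly isolate the only possible failure mode of the perturbation (new singleton atoms at $p\pm\varepsilon/3$ carrying the ``touching intersection'' types $\tau(p^{-})\cap\tau(p)$ and $\tau(p)\cap\tau(p^{+})$), you give a clean explicit example $\{[0,1),[0,2],[1,2]\}$ where the procedure creates the spurious codeword $2$, and you observe that this particular example is not open convex because it forces a connected $U_2$ to equal a disjoint union $U_1\sqcup U_3$.

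That said, what you have is a sharpened proof \emph{plan}, not a proof. You yourself flag the missing step: converting the global hypothesis ``$\mathcal{C}$ is open convex in some $\mathbb{R}^d$'' into the local combinatorial constraint that no interval realization of $\mathcal{C}$ exhibits a sandwiching at a shared endpoint (or that any such realization can be repaired). Your connectivity heuristic handles the simplest case, but it is not yet a lemma covering all patterns of coincident open/closed endpoints at a single $p$; and your fallback of invoking the Rosen--Zhang classification is a pointer, not an argument. Until that step is supplied, the proposal remains at the same status as the paper's: a conjecture with a promising attack, the obstacle now named precisely.
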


\section{Discussion}
We have proven by construction that every code $\mathcal{C}$ has a convex realization in $\mathbb{R}^{k-1}$ where $k$ is the number of nonempty codewords in $\mathcal{C}$ (Theorem 1). This result shows the necessity of assuming in applications openness or closedness of a convex realization of a neural code. That is, one cannot assume that receptive fields are general convex sets, or even general connected sets, as otherwise every neural code would be realizable.

Moving forward, since we have shown that every binary code is convex realizable, can we determine the minimal convex embedding dimension? Is every code convex realizable by sets of codimension 0? If a code is convex open or closed, when is the minimal open or closed convex embedding dimension strictly greater than the minimal embedding dimension? 

Theorem 2 provides some framework for answering these questions. Theorem 2 implies that, in certain cases, the dimension of the construction in Theorem 1 is exactly the minimal convex embedding dimension. Moreover, Theorem 2 implies that there is no upper bound on the minimal convex embedding dimensions of all codes. 

Theorem 2 inspires another interesting question, can $\mathcal{C}_n$ be realized by full-dimensional convex sets (receptive fields are typically full-dimensional)? That is, can $\mathcal{C}_n$ be realized in $\mathbb{R}^{k-1}$ by convex sets $U_1, \ldots, U_{n}$ where each $U_i$ contains an $(k-1)$-dimensional ball? We expect that such a realization does not exist. 
\section*{Acknowledgements}
This research was supported by the NSF-funded REU program at Texas A\&M University [NSF DMS-1460766]. We would like to thank our research mentor Dr. Anne Shiu for her guidance and indispensable insight. We would also like to thank Ola Sobieska for her assistance, as well as Dr. Carina Curto, Dr. Jack Jeffries, and Benjamin Spitz for providing feedback on earlier drafts. We also thank an anonymous for detailed suggestions that improved this work. Lastly, we would like to thank Dr. Vladimir Itskov for helpful discussions, including ideas that contributed to the proof that $\mathcal{C} = \mathcal{C}(\mathcal{U})$ in Theorem 1.

\bibliography{demo}
\bibliographystyle{abbrv}

\end{document}